\newtheoremstyle{myremark} % name
    {7pt}                    % Space above
    {7pt}                    % Space below
    {}  	                 % Body font
    {}                           % Indent amount
    {\bf}       	         % Theorem head font
    {.}                          % Punctuation after theorem head
    {.5em}                       % Space after theorem head
    {}  % Theorem head spec (can be left empty, meaning ‘normal’)
\theoremstyle{plain}
\newtheorem*{theorem*}{Fact}
\newtheorem{lemma}{Lemma}
\newtheorem{theorem}[lemma]{Theorem}
\newtheorem{definition}[lemma]{Definition}
\newtheorem{conjecture}[lemma]{Conjecture}
\theoremstyle{myremark}
\newcommand{\htpyequiv}{\simeq}
\newcommand{\compl}[1]{\overline{#1}}
\renewcommand{\subset}{\subseteq}
\newcommand{\cl}{\mathrm{Cl}}
\newcommand{\ind}{\mathrm{Ind}}
\newcommand{\lk}{\mathrm{lk}}
\begin{document}
\title[Chordal graphs and dismantling]{A note on independence complexes of chordal graphs and dismantling}
%\subjclass[2010]{05E45, 55U10, 68R05}
\keywords{chordal graph, independence complex, dismantling, strong collapsibility, cop-win graph}
\author{Micha{\l} Adamaszek}
\address{Department of Mathematical Sciences, University of Copenhagen, Universitetsparken 5, 2100 Copenhagen, Denmark}
\email{aszek@mimuw.edu.pl}
\thanks{Supported by the VILLUM FONDEN  network for Experimental Mathematics in Number Theory, Operator Algebras and Topology.}

\begin{abstract}
We show that the independence complex of a chordal graph is contractible if and only if this complex is dismantlable (strong collapsible) and it is homotopy equivalent to a sphere if and only if its core is a cross-polytopal sphere. The proof uses the properties of tree models of chordal graphs.
\end{abstract}

\maketitle
%\tableofcontents

%===============================================================
\section{Introduction}
\label{sect:intro}

Chordal graphs, that is graphs without an induced cycle of length more than three, form one of the fundamental classes in graph theory. They have a well known characterization in the realm of combinatorial topology:

\begin{theorem*}
\label{thm:intro}
Let $G$ be a simple, undirected graph. Then the following are equivalent:
\begin{itemize}
\item[(a)] $G$ is chordal.
\item[(b)] The clique complex of every connected induced subgraph of $G$ is contractible.
\item[(c)] The clique complex of every connected induced subgraph of $G$ is dismantlable.
\item[(d)] Every connected induced subgraph of $G$ is dismantlable.
\end{itemize}
\end{theorem*}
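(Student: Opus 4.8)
The plan is to prove the cyclic chain of implications $(a)\Rightarrow(d)\Rightarrow(c)\Rightarrow(b)\Rightarrow(a)$, using along the way the standard dictionary between dismantlability of a graph and strong collapsibility of its clique complex. Recall that a vertex $v$ of a simplicial complex $K$ is \emph{dominated} by a vertex $u$ if every inclusion-maximal face of $K$ containing $v$ also contains $u$ (equivalently $\lk_K(v)$ is a cone with apex $u$), and that a vertex $v$ of a graph $H$ is \emph{dominated} by $u$ if $N_H[v]\subseteq N_H[u]$; in both settings the object is dismantlable if repeated deletion of dominated vertices reduces it to a point.

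The heart of the matter is $(a)\Rightarrow(d)$: every connected chordal graph is dismantlable (cop-win). I would prove this by induction on $|V(H)|$. A one-vertex graph is trivially dismantlable. If $H$ is connected and chordal with $|V(H)|\ge 2$, then by Dirac's theorem $H$ has a simplicial vertex $v$, i.e.\ $N_H(v)$ is a clique; since $H$ is connected with at least two vertices, $v$ has a neighbour $u$, and $N_H[v]\subseteq N_H[u]$ because $N_H[v]$ is a clique containing $u$, so $v$ is dominated by $u$. Moreover $H-v$ is still connected: a path through $v$ enters and leaves $v$ through two of its neighbours $a,b$, which satisfy $ab\in E(H)$ as they lie in the clique $N_H(v)$, so the path reroutes around $v$; and $H-v$ is chordal, being an induced subgraph of $H$. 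By induction $H-v$ is dismantlable, hence so is $H$. Since every connected induced subgraph of a chordal graph $G$ is itself connected and chordal, $(a)\Rightarrow(d)$ follows.

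For $(d)\Rightarrow(c)$ (the converse comes for free) I use that for any graph $H$ a vertex $v$ is dominated by $u$ in $\cl(H)$ if and only if $N_H[v]\subseteq N_H[u]$, and that deleting such a $v$ from $\cl(H)$ produces exactly $\cl(H-v)$, because a flag complex is determined by its $1$-skeleton. Thus a dismantling of $H$ is literally the same data as a strong collapse of $\cl(H)$, so $H$ is dismantlable iff $\cl(H)$ is; applying this to every connected induced subgraph of $G$ gives $(c)\Leftrightarrow(d)$. For $(c)\Rightarrow(b)$ one only needs that a strong collapsible complex is contractible: if $v$ is dominated in $K$ then $K=(K\setminus v)\cup\overline{\st}_K(v)$ with $(K\setminus v)\cap\overline{\st}_K(v)=\lk_K(v)$, and both $\overline{\st}_K(v)$ and $\lk_K(v)$ are cones, hence contractible, so $K\setminus v\hookrightarrow K$ is a homotopy equivalence (in fact a deformation retraction); iterating, $K\simeq\ast$.

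Finally, $(b)\Rightarrow(a)$ by contraposition: if $G$ is not chordal it has an induced cycle $C$ on $n\ge 4$ vertices, which is a connected induced subgraph with no triangles, so $\cl(C)$ is the $1$-dimensional cycle graph, homotopy equivalent to $S^1$ and not contractible, so $(b)$ fails. The only nontrivial input in the whole argument is $(a)\Rightarrow(d)$, and within it the single external fact is Dirac's existence of a simplicial vertex in a chordal graph (which can itself be extracted from a clique tree, the kind of tree model the rest of the paper relies on); everything else is bookkeeping about deleting a dominated vertex and passing between a flag complex and its $1$-skeleton.
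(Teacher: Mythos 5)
Your proof is correct. Note that the paper itself never proves this Fact --- it is quoted in the introduction as known background --- so there is no internal argument to compare against; what can be said is that your treatment is fully consistent with the machinery the paper does develop. Your equivalence $(c)\Leftrightarrow(d)$ is exactly the paper's Lemma~\ref{lem:obs}(a) (domination of $v$ by $u$ in $\cl(H)$ amounts to $N_H[v]\subseteq N_H[u]$, and deleting a vertex from a flag complex yields the flag complex of the vertex-deleted graph), and your key step $(a)\Rightarrow(d)$ is the standard cop-win argument: Dirac's simplicial vertex $v$ is dominated by any neighbour $u$ since the clique $N_H[v]$ lies in $N_H[u]$, and $H-v$ stays connected because any two neighbours of $v$ on a path through $v$ are adjacent --- this is the same simplicial-vertex engine the paper exploits (via peeling vertices) in Lemma~\ref{lem:easy} and Theorem~\ref{thm:main3}, only applied to $\cl$ rather than $\ind$. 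The remaining implications are handled correctly: strong collapsibility implies contractibility via the gluing of the two cones $\st_K(v)\cup v$ and $\lk_K(v)$ along their common contractible subcomplex, and $(b)\Rightarrow(a)$ by observing that an induced cycle of length at least four is triangle-free, so its clique complex is the cycle itself, homotopy equivalent to $S^1$. One could alternatively close the loop with $(d)\Rightarrow(a)$ directly (a cycle of length at least four has no dominated vertex), but your route through contractibility is equally economical.
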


The purpose of this note is to show a related result in the dual case of independence complexes of chordal graphs.

\begin{theorem}
\label{thm:main}
Suppose $G$ is a chordal graph. Then the independence complex of $G$ is contractible if and only if it is dismantlable, or equivalently if $\compl{G}$ is dismantlable as a graph.
\end{theorem}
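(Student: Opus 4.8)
\emph{The plan.} Only one implication needs work. Indeed, $\ind(G)=\cl(\compl G)$ is a flag complex, and in a flag complex $\cl(H)$ a vertex $v$ is dominated precisely when $N_H[v]\subset N_H[w]$ for some $w$ (its link is $\cl(H[N_H(v)])$, which is a cone over $w$ iff $N_H(v)\subset N_H[w]$), while deleting a dominated vertex from $\cl(H)$ yields $\cl(H-v)$; so $\ind(G)$ is dismantlable iff the graph $\compl G$ is. Also every dismantlable complex is collapsible, hence contractible. So it remains to show: if $G$ is chordal and $\ind(G)$ is contractible, then $\ind(G)$ is dismantlable. I would do this by induction on $|V(G)|$, reducing the inductive step to producing one vertex $w$ dominated in $\ind(G)$ --- equivalently, distinct non-adjacent $u,w$ with $N_G(u)\subset N_G(w)$: then $\ind(G)$ strong collapses onto $\ind(G-w)\htpyequiv\ind(G)$, the graph $G-w$ is chordal, and induction finishes.

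\emph{Reductions.} If $G$ has an isolated vertex, $\ind(G)$ is a cone. If $G$ is disconnected, $\ind(G)=*_i\ind(G_i)$, so by Künneth for joins some $\ind(G_i)$ is acyclic; as independence complexes of chordal graphs are shellable, that factor is contractible, hence (induction) dismantlable, whence $\ind(G)$ is dismantlable. If $G$ is complete with $|V(G)|\ge2$, then $\ind(G)$ is disconnected, not contractible. So assume $G$ connected, not complete, and take a clique tree: choose a leaf clique $M$ with neighbour $M'$, and set $R=M\cap M'\neq\emptyset$ and $S=M\setminus R\neq\emptyset$ (each $s\in S$ simplicial with $N_G[s]=M$).

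\emph{Main case analysis.} (a) If $|S|=1$, say $S=\{v\}$, then $N_G(v)=R\subset N_G(u)$ for every $u\in M'\setminus M$, and $u\not\sim v$, so $u$ is dominated in $\ind(G)$ --- done, with no appeal to contractibility. (b) If some $x\notin M$ has $N_G(x)\subset R$, then $x$ dominates every vertex of $S$ (as $R\subset N_G(s)$ and $x\not\sim s$); deleting $S$ vertex by vertex gives strong collapses onto $\ind(G-S)\htpyequiv\ind(G)$, and induction applies to $G-S$. (c) If $|S|\ge3$, fix $v'\in S$: distinct vertices of $S$ are true twins, and since $\lk_{\ind(G)}(v')=\ind(G-M)=\lk_{\ind(G-v')}(v)$ for another $v\in S$, the inclusion $\ind(G-M)\incl\ind(G-v')$ is null-homotopic, whence $\ind(G)\htpyequiv\ind(G-v')\vee\susp\ind(G-M)$; so $\ind(G-v')$ is contractible, hence dismantlable by induction, and for its first dismantling move "$a$ dominated by $b$" one checks, using $|S\setminus\{v'\}|\ge2$, that $b\sim_G v'$ forces $a\sim_G v'$, so $N_G(b)\subset N_G(a)$ and $a$ is dominated in $\ind(G)$.

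\emph{The remaining case and the main obstacle.} The residual case is that every leaf clique has exactly two private vertices and no $x\notin M$ has $N_G(x)\subset R$. Here one must prove $\ind(G)$ is not contractible. Again $\ind(G)\htpyequiv\ind(G-v')\vee\susp\ind(G-M)$ for a private $v'$: if $\ind(G-M)$ is not acyclic we are done, and otherwise $\ind(G)\htpyequiv\ind(G-v')$, with $G-v'$ now having a leaf with a single private vertex. The obstruction is that a domination extracted from the smaller graph can have the surviving private vertex as its apex, and such a domination need not lift to $\ind(G)$; getting around this --- by choosing the dismantling order in $G-v'$ to use apices outside the problematic twin (the other leaves each still carry two private vertices, so cannot all be exhausted by such apices), or by producing a nonzero class of $\redhom_*(\ind(G))$ directly --- is, I expect, the technical heart of the proof.
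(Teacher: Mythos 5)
Your reductions and three main cases are sound: the translation between good pairs in $G$ and dominated vertices of $\ind(G)$, the disconnected/isolated/complete reductions, case (a) ($|S|=1$ gives the good pair $(v,u)$ for any $u\in M'\setminus M$ unconditionally), case (b), and the lifting argument in case (c) (where $|S\setminus\{v'\}|\ge 2$ really does force the apex $b\in M$ to pull $a$ into $M$, so $N_G(b)\subseteq N_G(a)$) all check out. But the proof is not complete, and you have correctly diagnosed where it breaks: in the residual case $|S|=2$ the good pair $(b,a)$ supplied by the inductive hypothesis in $G-v'$ may have the surviving twin $b=v$ as its apex, with $a\notin M$ adjacent to all of $R$; then $v'\in N_G(v)\setminus N_G(a)$ and the domination does not lift. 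Neither of your suggested escapes is carried out: plain dismantlability of $\ind(G-v')$ gives you no control over \emph{which} dominations exist (they could all have apex $v$), and it is not true that $\ind(G)$ must be non-contractible in the residual configuration, so producing a nonzero homology class "directly" is not an available shortcut. Since this is exactly the situation the introduction to Section~\ref{sect:proof} warns about (not every simplicial vertex belongs to a good pair), the missing case is the actual content of the theorem, not a technicality.

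The paper gets around precisely this obstruction by strengthening the induction hypothesis rather than trying to lift an arbitrary domination. One fixes a rooted tree model $(T,R)$ of $G$ and proves (Theorem~\ref{thm:main3}) that a connected chordal $G$ with contractible $\ind(G)$ has a pair that is not merely good but \emph{$(T,R)$-good}, i.e.\ positioned so that every path in $T$ from an $x$-node to the root passes a $y$-node. Goodness relative to the quotient model $(T/u,\square)$ is exactly what makes the promotion from $G\setminus N_G[u]$ to $G$ automatic (Lemma~\ref{lem:trgoodlemma}.(b)), and the freedom to choose the root is then spent on the peeling vertex $u$: taking $u$ to minimize the tree-distance to $R$ among peeling vertices rules out, via the auxiliary peeling vertex $u'$ on the $X$--$Y$ path, the configuration in which the promoted pair sits on the wrong side of the root. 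If you want to complete your clique-tree argument you will need an analogous strengthening — some invariant carried through the induction that forbids the apex from being the surviving twin — and at that point you have essentially reconstructed the $(T,R)$-good machinery.
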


In the last section we extend this result to other homotopy types of the independence complex (Theorem~\ref{thm:extension}).

\smallskip
Independence complexes of chordal graphs have received due attention in the literature \cite{engstrom2009complexes,ehrenborg2006topology,dochtermann2009algebraic,kawamura2010independence,van2008shellable,woodroofe2009vertex,dao2013projective}. 
They are vertex-decomposable \cite{woodroofe2009vertex}, hence homotopy equivalent to wedges of spheres or to a point. Every finite wedges sum of spheres is realizable as the homotopy type of the independence complex of some chordal graph \cite{kawamura2010independence}. Moreover, every homology class in the independence complex of a chordal graph is represented by a cross-polytopal sphere corresponding to some induced matching in the graph \cite{adamaszek2012algorithmic} (called cross-cycle in \cite{jonsson2010certain}). The strong connection between topology and combinatorics for independence complexes of chordal graphs makes it plausible that also contractibility of these spaces has a combinatorial manifestation.

Dismantling is a simple operation which, if available, reduces the size of a simplicial complex without changing its homotopy type. A single dismantling step consists of removing a vertex whose link is a cone. This operation has been studied independently by many authors under the names \emph{strong collapse} \cite{barmak2012strong}, \emph{LC-reduction} \cite{civan2007linear} or \emph{link-cone reduction} \cite{matouvsek2008lc}. A sequence of dismantling steps reduces a simplicial complex to a core subcomplex whose isomorphism type does not depend on the choices made in the process \cite{barmak2012strong,matouvsek2008lc,HellNesetril2004}. A complex is dismantlable if its core is a single vertex. There is a compatible notion of dismantlability for graphs \cite{HellNesetril2004}, so that the clique complex of $G$ is dismantlable if and only if $G$ is dismantlable as a graph. Dismantlable graphs are also known as \emph{cop-win graphs} \cite{nowakowski1983vertex}.

There are other combinatorial reduction schemes of this kind: non-evasiveness and collapsibility \cite{Kozlov}. A complex is non-evasive if it is a single point or if it has a vertex whose link and deletion are both non-evasive. A complex is collapsible if it can be reduced to a single vertex by removing free faces. Dismantlability implies non-evasiveness, which implies collapsibility, and that in turn implies contractibility, but neither implication can be reversed. It is easy to observe (Lemma~\ref{lem:easy}.(d)) that a contractible independence complex of a chordal graph is non-evasive. It is the upgrade of this statement from \emph{non-evasive} to \emph{dismantlable} that requires more work. In general, dismantlability of simplicial complexes does not follow from the combination (vertex-decomposable + contractible).

On the practical side, the advantage of dismantlability over non-evasiveness or collapsibility is that it is very easy to check. As mentioned before, dismantlability can be tested greedily, by removing any available vertex whose link is a cone and continuing in the same fashion with the smaller complex. In our case this easily leads to an algorithm which solves the decision problem ``Is the independence complex of a given chordal graph $G$ contractible?'' in time $O(n^3)$, where $n$ is the number of vertices in $G$. If the complex is contractible, the algorithm provides a dismantling sequence. It may be compared with the algorithm of \cite{adamaszek2012algorithmic}, which solves the same problem in time $O(m^2)$, where $m$ is the number of edges. On the other hand, if the complex is not contractible, the algorithm of \cite{adamaszek2012algorithmic} additionally computes an induced matching representing some non-zero homology class.

%=========================================================
\section{Prerequisites}
\label{sect:prereq}
For a simplicial complex $K$ the \emph{link} and \emph{deletion} of a vertex $v$ will be denoted $\lk_K(v)$ and $K\setminus v$, respectively. If $G$ is a simple, undirected graph then the \emph{clique complex} $\cl(G)$ and the \emph{independence complex} $\ind(G)$ are the simplicial complexes on the vertex set of $G$, whose faces are all the cliques, resp. all the independent sets of $G$. Clearly $\ind(G)=\cl(\compl{G})$, where $\compl{G}$ is the complement of $G$. The subgraph of $G$ induced by a vertex set $W\subseteq V(G)$ is denoted $G[W]$. The complete graph on $n$ vertices is denoted $K_n$. We write $N_G(u)=\{w~:~uw\in E(G)\}$ for the \emph{open neighbourhood} and $N_G[u]=N_G(u)\cup\{u\}$ for the \emph{closed neighbourhood} of a vertex $u$ in $G$. Then we have
\begin{equation}
\label{eq:links}
\ind(G)\setminus u = \ind(G\setminus u),\quad 
\lk_{\ind(G)}(u)=\ind(G\setminus N_G[u]), \quad \lk_{\cl(G)}(u)=\cl(G[N_G(u)]).
\end{equation}
A vertex $u$ of a simplicial complex $K$ is said to be \emph{dominated} by a vertex $u'$ if the link $\lk_K(u)$ is a cone with apex $u'$. The removal of a dominated vertex $u$ is called an \emph{elementary dismantling}. A complex $K$ is \emph{dismantlable} if there is a sequence of elementary dismantlings from $K$ to a point. In the language of \cite{barmak2012strong} $K$ is dismantlable if it has the strong homotopy type of a point.

\smallskip
We will now discuss the same concepts for graphs, see \cite[Sect. 2.11]{HellNesetril2004}. A vertex $u$ of $G$ is \emph{dominated} by $u'$ if and only if $N_G[u]\subseteq N_G[u']$. If $G$ can be reduced to a single vertex by successive removals of dominated vertices, then we say $G$ is \emph{dismantlable} (or \emph{cop-win}).\footnote{It should be pointed out that some authors call $u$ dominated by $u'$ when $N_G(u)\subseteq N_G(u')$. This gives a different notion of dismantling, which need not preserve the homotopy type of $\cl(G)$.} Since $N_G[u]\subseteq N_G[u']$ holds if and only if $\lk_{\cl(G)}(u)$ is a cone with apex $u'$, we immediately get the next observation.
\begin{lemma}
\label{lem:obs}
Let $G$ be a simple undirected graph.
\begin{itemize}
\item[(a)] The complex $\cl(G)$ is dismantlable if and only if $G$ is dismantlable as a graph.
\item[(b)] The complex $\ind(G)$ is dismantlable if and only if $\compl{G}$ is dismantlable as a graph.
\end{itemize}
\end{lemma}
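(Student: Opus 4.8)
The plan is to prove part (a) by induction on $|V(G)|$, matching elementary dismantlings of $\cl(G)$ one-for-one with removals of dominated vertices of $G$, and then to read off part (b) by substituting $\compl{G}$ for $G$ and invoking $\ind(G)=\cl(\compl{G})$.

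For the inductive step the two needed ingredients are already on the table. First, by the remark made just before the statement, for distinct vertices $u,u'$ of $G$ the link $\lk_{\cl(G)}(u)=\cl(G[N_G(u)])$ is a cone with apex $u'$ precisely when $N_G[u]\subseteq N_G[u']$; hence $u$ is a dominated vertex of the \emph{complex} $\cl(G)$ if and only if it is a dominated vertex of the \emph{graph} $G$. Second, deletion commutes with forming the clique complex: a face of $\cl(G)\setminus u$ is a clique of $G$ avoiding $u$, that is, a clique of $G\setminus u$, so $\cl(G)\setminus u=\cl(G\setminus u)$. Combining these, an elementary dismantling $\cl(G)\to\cl(G)\setminus u$ is available exactly when $u$ is dominated in $G$, and the complex it produces is again a clique complex, namely $\cl(G\setminus u)$, with one fewer vertex. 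Feeding this into the induction — the base case being a single vertex, whose clique complex is a point — yields that $\cl(G)$ admits a sequence of elementary dismantlings to a point if and only if $G$ admits a sequence of removals of dominated vertices down to a single vertex, i.e.\ if and only if $G$ is dismantlable as a graph.

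Part (b) is then immediate: applying (a) to the graph $\compl{G}$ and using $\ind(G)=\cl(\compl{G})$ shows that $\ind(G)$ is dismantlable if and only if $\compl{G}$ is dismantlable as a graph. There is no genuine obstacle in any of this; the only points that actually require attention are the coincidence of the simplicial and graph-theoretic notions of ``dominated'' (already recorded in the text), the identity $\cl(G)\setminus u=\cl(G\setminus u)$ which keeps the induction inside the class of clique complexes, and the fact that the two base cases — ``a point'' for complexes and ``a single vertex'' for graphs — match up.
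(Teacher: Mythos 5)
Your argument is correct and is essentially the paper's: the paper derives the lemma immediately from the observation that $N_G[u]\subseteq N_G[u']$ holds exactly when $\lk_{\cl(G)}(u)$ is a cone with apex $u'$, together with $\ind(G)=\cl(\compl{G})$, which is precisely the content you make explicit. Your spelled-out induction using $\cl(G)\setminus u=\cl(G\setminus u)$ is just an elaboration of the same one-to-one correspondence between dismantling steps, not a different route.
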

Let us make things more explicit for independence complexes. A vertex $u$ of $\ind(G)$ is dominated by $u'$ if and only if $N_{\compl{G}}[u]\subseteq N_{\compl{G}}[u']$, and the latter is equivalent to $N_G(u')\subseteq N_G(u)$. We will give this case a name.
\begin{definition}
\label{def:good}
Let $G$ be a simple, undirected graph. A pair $(x,y)$ of distinct vertices of $G$ is called \emph{good in $G$} if $N_G(x)\subseteq N_G(y)$.
\end{definition}
Note that if $(x,y)$ is a good pair in $G$ then $xy\not\in E(G)$. As discussed before, if $(x,y)$ is a good pair in $G$ then $y$ is dominated by $x$ in $\compl{G}$ and in $\ind(G)$ and the removal of $y$ is an elementary dismantling of $\ind(G)$.\footnote{This is not the same as the notions of \emph{co-domination} and \emph{co-dismantling} defined in \cite{biyikouglu2014vertex}, since they are based on the condition $N_G[x]\subseteq N_G[y]$, and need not preserve the homotopy type of $\ind(G)$.}

\smallskip
We can now state an equivalent version of Theorem~\ref{thm:main}.
\begin{theorem}
\label{thm:main2}
If $G$ is a chordal graph such that $\ind(G)$ is contractible then either $G$ is a single vertex or $G$ has a good pair.
\end{theorem}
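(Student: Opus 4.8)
The plan is to prove the statement by induction on $|V(G)|$; once it is in place, Theorem~\ref{thm:main} follows at once, since removing the dominated vertex $y$ of a good pair $(x,y)$ is an elementary dismantling of $\ind(G)$ and leaves a chordal graph with $\ind(G\setminus y)\simeq\ind(G)$ still contractible, so one simply iterates. First I would clear away the degenerate cases. If $G$ has an isolated vertex $v$ and $|V(G)|\ge2$, then $(v,y)$ is good for any other vertex $y$. If $G$ is disconnected (and, having no isolated vertex, has all components of size $\ge2$), then $\ind(G)$ is the join of the independence complexes of the components, and a join of wedges of spheres is contractible only if one factor is; the inductive hypothesis applied to that component gives a good pair, which is also a good pair of $G$. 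If $G$ is complete, then $\ind(G)$ is a discrete set of $|V(G)|\ge2$ points, not contractible. So from here on $G$ is connected, non-complete and without isolated vertices.

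Next I would bring in a clique tree $T$ of $G$: its nodes are the maximal cliques, adjacent nodes are incomparable, and for every vertex the maximal cliques containing it span a subtree of $T$. Since $G$ is connected and not complete, $T$ has a leaf $\ell$ with a unique neighbour $m$; set $S=C_\ell\cap C_m$ and $A=C_\ell\setminus C_m$. Elementary clique-tree facts give $A\ne\emptyset$, $C_m\setminus C_\ell\ne\emptyset$ (incomparability of adjacent nodes), $S\ne\emptyset$ (connectedness), and: every vertex of $A$ is simplicial with closed neighbourhood exactly $C_\ell$, so the vertices of $A$ are pairwise closed twins and $N_G(a)=C_\ell\setminus\{a\}$ for $a\in A$. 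I would then split on $|A|$.

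If $|A|=1$, say $A=\{x\}$, then $N_G(x)=S$, and for any $b\in C_m\setminus C_\ell$ the clique $C_m$ contains $b$ together with all of $S$, so $S\subseteq N_G(b)$, while $b\notin C_\ell$ forces $b\ne x$ and $xb\notin E(G)$; hence $(x,b)$ is a good pair (note that contractibility of $\ind(G)$ is not needed here — its role is to prevent the other case from persisting). If $|A|\ge2$, fix distinct twins $x,x'\in A$. The key homotopy-theoretic input is that $\lk_{\ind(G)}(x')=\ind(G\setminus N_G[x'])=\ind(G\setminus C_\ell)$ coincides with $\lk_{\ind(G\setminus x')}(x)$, hence is null-homotopic inside $\ind(G\setminus x')$; writing $\ind(G)$ as $\ind(G\setminus x')$ with a cone on this subcomplex attached yields $\ind(G)\simeq\ind(G\setminus x')\vee\susp\ind(G\setminus C_\ell)$. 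Since independence complexes of chordal graphs are wedges of spheres, contractibility of $\ind(G)$ forces both $\ind(G\setminus x')$ and $\ind(G\setminus C_\ell)$ to be contractible. I would apply the inductive hypothesis to the chordal graph $G\setminus x'$ (which has at least three vertices) and obtain a good pair $(a,b)$. If $a\notin C_\ell$ then $a$ has no neighbour in $A$, so $N_G(a)=N_{G\setminus x'}(a)\subseteq N_{G\setminus x'}(b)\subseteq N_G(b)$ and $(a,b)$ is good in $G$; if $a,b\in C_\ell\setminus\{x'\}$, restoring $x'$ adds it to both $N(a)$ and $N(b)$, so $(a,b)$ stays good. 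The only leftover possibility is that every good pair of $G\setminus x'$ has its first coordinate in the clique $C_\ell\setminus\{x'\}$ and its second coordinate outside $C_\ell$.

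I expect disposing of this last configuration to be the main obstacle. The idea I would pursue is to peel off the whole twin class: the same computation shows that deleting any twin of $x$ keeps $\ind(G\setminus C_\ell)$ — and hence the ambient complex — contractible, so $\ind(G\setminus A)$ is contractible while $G\setminus A$ is still connected and chordal. One then analyses the clique tree of $G\setminus A$, in which $\ell$ has disappeared and $S$ has been absorbed into $C_m$, and tracks how a good pair of $G\setminus A$ (supplied by induction) meets $S$. Choosing the leaf $\ell$ with $A$ as large as possible — equivalently with $S$ as small as possible — should pin down enough of the tree model around $S$, via the subtree description of neighbourhoods, to force one of two outcomes: either there is a vertex $u\notin C_\ell$ with $N_G(u)\subseteq S$, in which case $(u,x)$ is a good pair of $G$ for every $x\in A$; or $G$ contains an induced subgraph whose independence complex is a non-trivial wedge of spheres that splits off $\ind(G)$, contradicting contractibility. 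Turning this dichotomy into a rigorous argument — essentially clique-tree bookkeeping, once the homotopy decomposition and the wedge-of-spheres property are available — is where the real work of the proof lies.
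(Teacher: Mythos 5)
Your setup is fine as far as it goes: the reduction of Theorem~\ref{thm:main} to Theorem~\ref{thm:main2}, the degenerate cases, the clique-tree facts, the case $|A|=1$, and the splitting $\ind(G)\htpyequiv\ind(G\setminus x')\vee\Sigma\,\ind(G\setminus C_\ell)$ for $|A|\ge 2$ (which is Lemma~\ref{lem:easy}.(a) applied to the peeling vertex $x'$) are all correct. But the proof has a genuine gap exactly where you flag it: the configuration in which every good pair $(a,b)$ of $G\setminus x'$ has $a\in C_\ell\setminus\{x'\}$ and $b\notin C_\ell$ is never disposed of, and this is not residual bookkeeping --- it is the entire difficulty of the theorem, namely that a plain good pair of an induced subgraph need not lift to $G$. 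Moreover the escape route you sketch is unsound as stated: the claim that $\ind(G\setminus A)$ is contractible does not follow from iterating the splitting, because the peeling decomposition deletes a vertex that has a simplicial \emph{neighbour}, so twins can be deleted only while another twin remains; the last vertex of $A$ cannot be removed this way, and there is no evident reason for $\ind(G\setminus A)$ to be contractible. Even if it were, a good pair of $G\setminus A$ would face the same lifting obstruction, and the proposed dichotomy (either some $u\notin C_\ell$ has $N_G(u)\subseteq S$, or a noncontractible wedge splits off) is asserted, not derived; choosing the leaf with $A$ maximal is a heuristic, not an argument.

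For comparison, the paper circumvents precisely this obstruction by strengthening the inductive statement rather than refining the case analysis. Theorem~\ref{thm:main3} produces, for a connected chordal $G$ with $\ind(G)$ contractible and for an \emph{arbitrary} rooted tree model $(T,R)$, a pair that is $(T,R)$-good, i.e.\ the $x$-nodes lie behind some $y$-node as seen from the root. The induction runs through a peeling vertex $u$ chosen to minimize the distance from its nodes to $R$; the component $G'$ of $G\setminus N_G[u]$ with contractible independence complex carries the rooted model $((T/u)|_{V(G')},\square)$, Lemma~\ref{lem:trgoodlemma} lifts the inductively obtained pair to a good pair of $G$, and the minimality of $u$ (via a common neighbour $u'$ of $x$ and $y$, which is again peeling) forces the lifted pair to be $(T,R)$-good. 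That extra rootedness datum is exactly the positional control your unrooted induction lacks; to rescue your clique-tree scheme you would need an analogous strengthening, e.g.\ an inductive hypothesis guaranteeing a good pair whose first coordinate avoids a prescribed part of the graph, which is in essence what $(T,R)$-goodness encodes. As it stands, the proposal proves the theorem only in the cases it explicitly settles and leaves the critical case open.
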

This statement immediately implies Theorem~\ref{thm:main} since the class of chordal graphs is hereditary, that is closed for induced subgraphs. The proof of Theorem~\ref{thm:main2} occupies the next section. In the remaining part of this section we recall various elementary properties of chordal graphs.

\smallskip
A vertex $v$ of a graph $G$ is called \emph{simplicial} if the subgraph of $G$ induced by $N_G(v)$ is a clique. For the purpose of this note a vertex $u$ of $G$ will be called \emph{peeling} if some neighbour of $u$ is simplicial.\footnote{Every \emph{peeling} vertex of $G$ is a \emph{shedding} vertex of $\ind(G)$ in the sense of vertex-decomposability. See \cite{woodroofe2009vertex}.}
It is a fundamental theorem of Dirac that every chordal graph has a simplicial vertex \cite{dirac,west2001introduction}. It follows that a chordal graph with at least one edge has a peeling vertex.

\smallskip
The next lemma is a compilation of various topological prerequisites related to independence complexes of chordal graphs. It also contains the weak analogue of Theorem~\ref{thm:main} with \emph{dismantlable} replaced by \emph{non-evasive}.
\begin{lemma}
\label{lem:easy}
Suppose $G$ is a chordal graph.
\begin{itemize}
\item[(a)] \cite{adamaszek2012splittings,engstrom2009complexes,kawamura2010independence} If $u$ is a peeling vertex then there is a homotopy equivalence
\begin{equation}
\label{eq:peel}
\ind(G)\htpyequiv \ind(G\setminus u)\vee\Sigma\ \ind(G\setminus N_G[u]).
\end{equation}
\item[(b)] \cite{woodroofe2009vertex,van2008shellable,dochtermann2009algebraic,kawamura2010independence} $\ind(G)$ is homotopy equivalent to a wedge of spheres or it is contractible.
\item[(c)] If $\ind(G)$ is contractible and $u$ is a peeling vertex then $\ind(G\setminus u)$ and $\ind(G\setminus N_G[u])$ are contractible. In particular, $V(G\setminus N_G[u])\neq\emptyset$.
\item[(d)] If $\ind(G)$ is contractible then it is non-evasive (therefore collapsible).
\item[(e)] If $G_1,\ldots,G_k$ are the connected components of $G$ then $\ind(G)$ is contractible if and only if at least one of $\ind(G_i)$ is contractible.
\item[(f)] If $(x,y)$ is a good pair in $G$ then $x$ is simplicial.
\end{itemize}
\end{lemma}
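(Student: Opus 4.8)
Parts (a) and (b) are quoted from the cited literature, so only (c)--(f) need an argument, and in each case the plan is to combine (a), (b) with one standard observation. For (c) I would pass to reduced homology in the splitting~\eqref{eq:peel}. Since the class of chordal graphs is hereditary, $G\setminus u$ and $G\setminus N_G[u]$ are again chordal, so by (b) each of $\ind(G\setminus u)$ and $\ind(G\setminus N_G[u])$ is either contractible or homotopy equivalent to a wedge of one or more spheres. From~\eqref{eq:peel} one reads off $\redhom_n(\ind(G))\cong\redhom_n(\ind(G\setminus u))\oplus\redhom_{n-1}(\ind(G\setminus N_G[u]))$ for all $n$; if $\ind(G)$ is contractible the left-hand side vanishes, forcing both summands to vanish in every degree, and a complex satisfying the dichotomy of (b) with trivial reduced homology must be contractible. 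A contractible complex is in particular nonempty, which yields the last clause $V(G\setminus N_G[u])\neq\emptyset$.

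For (d) the plan is induction on $|V(G)|$. If $G$ is edgeless then $\ind(G)$ is the full simplex on $V(G)$, which is non-evasive (this also settles the one-vertex base case). If $G$ has an edge, I would choose a connected component containing an edge; by Dirac's theorem it has a simplicial vertex, and since that component has at least two vertices the simplicial vertex has a neighbour $u$, so $u$ is a peeling vertex of $G$. By~\eqref{eq:links} we have $\lk_{\ind(G)}(u)=\ind(G\setminus N_G[u])$ and $\ind(G)\setminus u=\ind(G\setminus u)$; both underlying graphs are chordal with fewer vertices than $G$, and by (c) both complexes are contractible, so by the inductive hypothesis both are non-evasive, hence $\ind(G)$ is non-evasive.

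For (e) the key identity is $\ind(G)=\ind(G_1)*\cdots*\ind(G_k)$, because an independent set of $G$ is exactly a union of independent sets, one from each component. If some $\ind(G_i)$ is contractible, the join is homotopy equivalent to a cone and hence contractible; conversely, if no factor is contractible, then by (b) each factor is a wedge of one or more spheres and has nontrivial reduced homology, and the Künneth formula for joins makes $\redhom_*(\ind(G))$ nontrivial as well, so $\ind(G)$ is not contractible. For (f) I would argue by contradiction: if $x$ were not simplicial there would be distinct non-adjacent $a,b\in N_G(x)\subseteq N_G(y)$; since a good pair is non-adjacent we have $y\notin N_G(x)$, so $x,a,y,b$ are four distinct vertices, and the edges $xa,ay,yb,bx$ are present while $xy,ab$ are absent, so $G[\{x,a,y,b\}]$ is an induced $4$-cycle, contradicting chordality. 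I do not expect a genuine obstacle in any of this; the only points needing a little care are producing the peeling vertex in (d) — which is why one first restricts to a component with an edge before applying Dirac — and, in (c), using the dichotomy of (b) to upgrade ``vanishing reduced homology'' to ``contractible'' (which is also what excludes $G\setminus N_G[u]$ being empty).
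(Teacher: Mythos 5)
Your proof is correct and follows essentially the same route as the paper: (c) is deduced from the splitting \eqref{eq:peel} together with the sphere-wedge dichotomy of (b), (d) by induction on a peeling vertex using (c) and the relations \eqref{eq:links}, (e) via the join decomposition over components, and (f) via an induced $4$-cycle. The only (inessential) difference is that the paper also sketches arguments for (a) and for deriving (b) from (a) by induction, whereas you defer both to the cited literature, which the statement permits.
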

\begin{proof}
For (a) note that if $v$ is the simplicial neighbour of $u$ then $N_G[v]\subseteq N_G[u]$. This condition, together with \eqref{eq:links}, already implies \eqref{eq:peel}, since the inclusion of $\lk_{\ind(G)}(u)$ into $\ind(G)\setminus u$ factors through the contractible cone $v\ast \lk_{\ind(G)}(u)$; see \cite[Thm. 3.3]{adamaszek2012splittings} for details. Part (b) follows from (a) by induction with initial conditions $\ind(\emptyset)=S^{-1}$ and $\ind(\compl{K_n})=\bigvee^{n-1}S^0$. Part (c) follows from (a) and (b). Part (d) follows by induction from (c) and the relations \eqref{eq:links}. Part (e) follows from (b) and the fact that $\ind(G)$ is the join of $\ind(G_i)$, $i=1,\ldots,k$. Finally, for (f), note that if two vertices $z,z'\in N_G(x)$ were not adjacent, then $x,z,y,z'$ would be an induced $4$-cycle in $G$ since $xy\not\in E(G)$.
\end{proof}

%=========================================================
\section{The main theorem}
\label{sect:proof}

In contrast with the simple argument of Lemma~\ref{lem:easy}.(d), where any peeling vertex $u$ does the job, not every simplicial vertex $x$ is part of a good pair $(x,y)$. That makes the proof of Theorem~\ref{thm:main2} more complicated. To locate good pairs we will employ the characterization of chordal graphs as intersection graphs of subtrees of a tree (see \cite{west2001introduction}).

We say that a tree $T$ (whose vertices will be called \emph{nodes}) is a \emph{tree model} of a graph $G$ if:
\begin{itemize}
\item[(a)] every node of $T$ contains a subset of vertices of $G$ (perhaps empty),
\item[(b)] for every vertex $v\in V(G)$, the nodes of $T$ which contain $v$ form a subtree of $T$,
\item[(c)] $vw\in E(G)$ if and only if there is a node of $T$ containing both $v$ and $w$.
\end{itemize}
A node of $T$ which contains a vertex $v$ will be called a \emph{$v$-node}. Now (b) states that for each vertex $v$, the $v$-nodes span a tree. A graph is chordal if and only if it has a tree model \cite{gavril1974intersection}. One common construction is  a clique tree, which has the additional property that the nodes of $T$ are in bijection with maximal cliques of $G$, but we do not make any such extra restrictions. If $R$ is any marked node of $T$ (the \emph{root}), then we call $(T,R)$ a \emph{rooted tree model} of $G$.

\begin{figure}
\label{fig:1}
\includegraphics{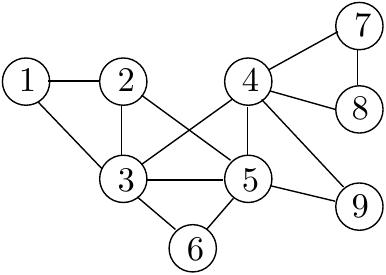} $\quad$ \includegraphics{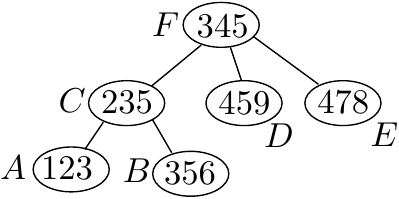} $\quad$ \includegraphics{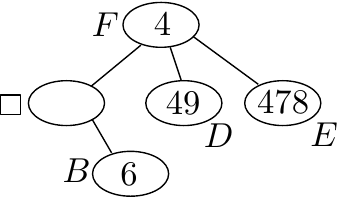}

\caption{A chordal graph $G$ with contractible $\ind(G)$, its tree model $T$ and the rooted tree model $(T/2,\square)$ of the graph $G\setminus N_G[2]=G[\{4,6,7,8,9\}]$.}
\end{figure}

If $T$ is a tree model of $G$ and $W\subset V(G)$ is any subset, then $T|_W$ will be the tree obtained from $T$ by retaining only the vertex labels that belong to $W$ and erasing those not in $W$, without changing the shape of $T$. Then $T$ is a tree model of the induced subgraph $G[W]$.

If $u\in V(G)$ is any vertex and $T$ is a tree model of $G$ then we denote by $(T/u,\square)$ the rooted tree model of $G\setminus N_G[u]$ obtained as follows: contract all the $u$-nodes of $T$ to a single node called $\square$; assign $\square$ the empty set of labels; erase all vertex labels of vertices from $N_G[u]$. The new pair $(T/u,\square)$ is clearly a rooted tree model of $G\setminus N_G[u]$.

\medskip
For the inductive arguments we will need good pairs which are compatible with a given rooted tree model of $G$ in the sense defined next.

\begin{definition}
\label{def:trgood}
Suppose $(T,R)$ is a rooted tree model of $G$. We say that  \emph{$(x,y)$ is $(T,R)$-good in $G$} if $(x,y)$ is good in $G$ and the shortest path in $T$ from any $x$-node to $R$ contains some $y$-node.
\end{definition}

Intuitively, a pair $(x,y)$ is $(T,R)$-good if the $x$-nodes are located deeper in $(T,R)$ than the $y$-nodes. The next lemma is used to promote good pairs from certain subgraphs of $G$ to good pairs in $G$.
\begin{lemma}
\label{lem:trgoodlemma}
Suppose $G$ is chordal and $u$ is any vertex of $G$.
\begin{itemize}
\item[(a)] If $x$ is an isolated vertex of $G\setminus N_G[u]$ then $(x,u)$ is a good pair in $G$.
\item[(b)] Suppose $T$ is any tree model of $G$. If $(x,y)$ is a $(T/u,\square)$-good pair in $G\setminus N_G[u]$ then $(x,y)$ is a good pair in $G$.
\item[(c)] Suppose $(T,R)$ is any rooted tree model of $G$ and $G'$ is a connected component of $G$. If $(x,y)$ is a $(T|_{V(G')},R)$-good pair in $G'$ then it is a $(T,R)$-good pair in $G$.
\end{itemize}
In particular, in either case, $x$ is a simplicial vertex of $G$ by Lemma~\ref{lem:easy}.(f).
\end{lemma}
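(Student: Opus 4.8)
The plan is to prove (a), (b), (c) separately by unwinding the definitions, the only real work being in (b), where I would exploit property~(b) of a tree model (the nodes carrying a fixed vertex span a subtree). Parts (a) and (c) I would dispatch immediately. For (a): since $x$ is isolated in $G\setminus N_G[u]$, every neighbour of $x$ in $G$ lies in $N_G[u]$, and since $x\notin N_G[u]$ we have $xu\notin E(G)$, so in fact $N_G(x)\subseteq N_G(u)$; thus $(x,u)$ is good in $G$. For (c): as $G'$ is a connected component, $N_G(v)=N_{G'}(v)$ for every $v\in V(G')$, so goodness of $(x,y)$ in $G'$ transfers to $G$; and $T|_{V(G')}$ has the same underlying tree as $T$ and keeps all labels $x$ and $y$ (because $x,y\in V(G')$), so the $x$-nodes, the $y$-nodes, and the shortest path from any $x$-node to $R$ are literally the same in the two models, whence the path condition transfers verbatim.

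For (b) I would set $H=G\setminus N_G[u]$, so $x,y\notin N_G[u]$; in particular neither $x$ nor $y$ is adjacent to $u$, so no node of $T$ is both an $x$-node and a $u$-node, and likewise for $y$ and $u$. Let $U$ be the (connected) subtree of $u$-nodes of $T$; then in $T/u$ this subtree is collapsed to $\square$, and for any node $A\notin U$ the path from the image of $A$ to $\square$ in $T/u$ is exactly the image of the segment from $A$ to $B'$ in $T$, where $B'\in U$ is the node of $U$ nearest to $A$. Now fix $z\in N_G(x)$; the goal is $z\in N_G(y)$. If $z\notin N_G[u]$, then $z$ is a neighbour of $x$ in $H$, and goodness of $(x,y)$ in $H$ gives $z\in N_H(y)\subseteq N_G(y)$. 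Otherwise $z\in N_G[u]$, and $z\neq u$ (else $xu\in E(G)$, contradicting $x\notin N_G[u]$), so $z$ is a common neighbour of $x$ and $u$.

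In this remaining case I would pick a node $A$ of $T$ containing $\{x,z\}$ and a node $B$ containing $\{u,z\}$. Because the $z$-nodes span a subtree, the whole path from $A$ to $B$ consists of $z$-nodes, and this path passes through $B'$ (the node of $U$ closest to $A$, which must be the first node of $U$ met on the way from $A$ to $B$); hence the segment from $A$ to $B'$ consists of $z$-nodes. Since $A\notin U$, it is an $x$-node of $T/u$, so by $(T/u,\square)$-goodness the path from $A$ to $\square$ — i.e.\ the image of the segment from $A$ to $B'$ — contains a $y$-node $C$, and $C\neq B'$ since $B'\in U$ cannot carry the label $y$. Translating back, $C$ lies on the all-$z$-node segment from $A$ to $B'$, so $C$ is both a $z$-node and a $y$-node, giving $yz\in E(G)$, i.e.\ $z\in N_G(y)$. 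This establishes $N_G(x)\subseteq N_G(y)$. Finally, the ``in particular'' clause follows because in each of (a), (b), (c) we have produced a good pair $(x,y)$ in $G$, so $x$ is simplicial by Lemma~\ref{lem:easy}.(f). I expect the main obstacle to be the bookkeeping in (b): matching up the path from an $x$-node to $\square$ in the contracted model $T/u$ with an initial segment of a path between $z$-nodes in $T$, and checking that the $y$-node produced by $(T/u,\square)$-goodness occurs strictly before that path enters the collapsed subtree $U$, so that it genuinely sits on the all-$z$-node segment; the rest is routine translation between $G$ and its tree model via properties (b) and (c) of tree models.
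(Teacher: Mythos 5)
Your proposal is correct and follows essentially the same route as the paper: (a) and (c) by direct unwinding of definitions, and (b) by taking a common $z$-node path between an $x$-node and a $u$-node, using that the $z$-nodes span a subtree, and applying $(T/u,\square)$-goodness to find a $y$-node on its image in $T/u$. Your extra bookkeeping with the gate node $B'$ just makes explicit a step the paper treats briefly ("$P$ induces a path $P'$ from $X$ to $\square$"), so there is no substantive difference.
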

\begin{proof}
Part (a) is obvious since the assumptions imply $N_G(x)\subseteq N_G[u]$ and $xu\not\in E(G)$.

For part (b), consider any vertex $z$ with $xz\in E(G)$. We want to show that $yz\in E(G)$. If $z\in V(G\setminus N_G[u])$ then the conclusion holds since $(x,y)$ is good in $G\setminus N_G[u]$. Next, suppose that $z\in N_G[u]$. Then there must exist two nodes $U,X$ in $T$ such that $U$ is a $u$-node, $X$ is an $x$-node and both $X,U$ are $z$-nodes. Since the set of $z$-nodes forms a subtree containing $X$ and $U$, every node on the shortest path $P$ from $X$ to $U$ in $T$ is also a $z$-node. The path $P$ induces a path $P'$ from $X$ to $\square$ in $T/u$. The path $P'$ contains a $y$-node, by the assumption that $(x,y)$ is $(T/u,\square)$-good. It follows that $P$ contains a $y$-node, and therefore $yz\in E(G)$.

Part (c) is clear since the vertices from the other components of $G$ do not affect the neighbourhoods of $x$ nor $y$.
\end{proof}

Next comes the key part of the argument, where we inductively construct good pairs with respect to arbitrary rooted tree models in \emph{connected} chordal graphs.

\begin{theorem}
\label{thm:main3}
Suppose $G$ is a connected chordal graph and $\ind(G)$ is contractible. Then
\begin{itemize}
\item[(a)] either $G$ is a single vertex,
\item[(b)] or, for any rooted tree model $(T,R)$ of $G$, there is a $(T,R)$-good pair $(x,y)$ in $G$.
\end{itemize}
\end{theorem}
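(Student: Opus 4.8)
The plan is to prove Theorem~\ref{thm:main3} by induction on $|V(G)|$. If $G$ is a single vertex we are in alternative~(a), so assume $G$ is connected with at least two vertices; then $G$ has an edge, hence a simplicial vertex by Dirac's theorem, and therefore a peeling vertex. Given a rooted tree model $(T,R)$, the strategy is to pick a suitable peeling vertex $u$, restrict to a connected component of $G\setminus N_G[u]$, invoke the inductive hypothesis there, and lift the good pair obtained back to $G$.

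The lifting step uses only the earlier lemmas. Fix a peeling vertex $u$. By Lemma~\ref{lem:easy}(c), $\ind(G\setminus N_G[u])$ is contractible and $V(G\setminus N_G[u])\ne\emptyset$, while $(T/u,\square)$ is a rooted tree model of $G\setminus N_G[u]$; by Lemma~\ref{lem:easy}(e) some connected component $G'$ of $G\setminus N_G[u]$ has $\ind(G')$ contractible, and $((T/u)|_{V(G')},\square)$ is then a rooted tree model of the smaller connected chordal graph $G'$, to which the inductive hypothesis applies. If $G'$ is a single vertex $x$, then $x$ is isolated in $G\setminus N_G[u]$ and Lemma~\ref{lem:trgoodlemma}(a) gives the good pair $(x,u)$ in $G$; otherwise induction produces a $((T/u)|_{V(G')},\square)$-good pair $(x,y)$ in $G'$, which is a $(T/u,\square)$-good pair in $G\setminus N_G[u]$ by Lemma~\ref{lem:trgoodlemma}(c) and hence a good pair in $G$ by Lemma~\ref{lem:trgoodlemma}(b). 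In either case we have a good pair in $G$; to see that it is $(T,R)$-good it suffices that the subtree $T_u$ of $u$-nodes separates $R$ from every $x$-node, meaning that the shortest path in $T$ from each $x$-node $X$ to $R$ meets $T_u$. Granting this, let $W$ be the first node of $T_u$ met by that path. Since $x,y\notin N_G[u]$, no $x$- or $y$-node lies in $T_u$, so the image under the contraction $T\to T/u$ of the segment of the path from $X$ to $W$ is the shortest path from $X$ to $\square$; by $(T/u,\square)$-goodness it contains a $y$-node, hence that segment, and so the whole path to $R$, contains a $y$-node of $T$; and when the pair is $(x,u)$ from the single-vertex case, $W$ itself is the required $u$-node.

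Thus everything reduces to choosing $u$ so that $T_u$ separates $R$ from some contractible component of $G\setminus N_G[u]$. If the root node $R$ contains a peeling vertex $u$, this is immediate, since then $R\in T_u$ and every path to $R$ meets $T_u$. Otherwise I would take $u$ to be a peeling vertex whose $u$-node closest to $R$ is as near $R$ as possible, let $C_R$ be the component of $T\setminus T_u$ that contains $R$, and split $G\setminus N_G[u]=G[A]\sqcup G[B]$ according to whether a vertex lies in $C_R$; then $\ind(G\setminus N_G[u])=\ind(G[A])\ast\ind(G[B])$, so since each factor is by Lemma~\ref{lem:easy}(b) a wedge of spheres or contractible, one of them is contractible. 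The key point is that the extremal choice of $u$ forces $\ind(G[B])$ to be the contractible factor whenever $A\ne\emptyset$: a contractible $G[A]$ lying on the $R$-side of $T_u$ ought to yield, via Dirac's theorem applied inside $C_R$, a peeling vertex of $G$ closer to $R$ than $u$, contradicting the choice of $u$. A contractible component $G'$ of $G\setminus N_G[u]$ can then be taken inside $B$, on the far side of $T_u$ from $R$, and the lifting step finishes the induction.

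Turning this last implication into a rigorous argument — above all, extracting from $C_R$ a vertex that is simplicial, and hence peeling, in the ambient graph $G$ and not merely in $G[A]$, and dealing with the degenerate case in which $G'$ is a single vertex near $R$ — is the step I expect to be the main obstacle. The rest is an assembly of Lemmas~\ref{lem:easy} and~\ref{lem:trgoodlemma}, the relations~\eqref{eq:links}, and the fact that the independence complex of a disjoint union of graphs is the join of the independence complexes of the parts.
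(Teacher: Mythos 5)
Your inductive architecture, the lifting machinery (Lemma~\ref{lem:easy} plus Lemma~\ref{lem:trgoodlemma}, and your gate/contraction argument showing that a $(T/u,\square)$-good pair becomes $(T,R)$-good as soon as every path from an $x$-node to $R$ meets the subtree $T_u$ of $u$-nodes), and even the extremal choice of $u$ (a peeling vertex whose nodes are as close to $R$ as possible) all coincide with the paper's proof. The gap is your ``key point'': it is false that this extremal choice forces a contractible component of $G\setminus N_G[u]$ to lie on the far side of $T_u$ from $R$, and in general no peeling vertex with your separation property exists at all. Concretely, take $G=P_7$ with vertices $v_1-\cdots-v_7$, the clique-path tree model with nodes $N_i=\{v_i,v_{i+1}\}$, $i=1,\dots,6$, rooted at $R=N_3=\{v_3,v_4\}$. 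Here $\ind(G)$ is contractible, the only peeling vertices are $v_2$ and $v_6$, with $f(v_2)=1<f(v_6)=2$, and the root node contains no peeling vertex. For either choice $u\in\{v_2,v_6\}$ the graph $G\setminus N_G[u]$ is a copy of $P_4$ with contractible independence complex whose subtrees all lie in the root component $C_R$ of $T\setminus T_u$; thus $B=\emptyset$ and $A\neq\emptyset$ with $\ind(G[A])$ contractible, yet there is no peeling vertex $p$ with $f(p)<1$, since such a $p$ would have to lie in $N_3$ and neither $v_3$ nor $v_4$ is peeling. So the contradiction you hope to extract ``via Dirac's theorem applied inside $C_R$'' does not exist, and the step you yourself flag as the main obstacle cannot be repaired in the form you want.

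The paper's proof avoids this entirely: it never tries to separate $R$ from the contractible component. With the same extremal $u$, it takes the contractible component $G'$ of $G\setminus N_G[u]$ (wherever it sits), obtains the pair $(x,y)$ by induction (or $(x,u)$ when $G'$ is a single vertex), and then analyzes the position of $R$ relative to the $y$-node $Y$ closest to $X$ on the path from $X$ (the $x$-node nearest the $u$-nodes) towards $U$. Either $R=Y$ or $R$ lies in a component of $T\setminus Y$ other than the one containing $X$: then every path from an $x$-node to $R$ passes through $Y$, so $(x,y)$ is already $(T,R)$-good and no contradiction is needed. Or $R$ lies on the $X$-side of $Y$: then a common neighbour $u'$ of $x$ and $y$ (it exists since $(x,y)$ is good in the connected graph $G'$; it is peeling in $G$ because $x$ is simplicial in $G$ by Lemma~\ref{lem:easy}.(f); and its subtree contains the whole path from $X$ to $Y$) satisfies $f(u')<f(u)$, contradicting the choice of $u$. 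In the $P_7$ example the pair produced by induction on the root-side $P_4$ is $(v_7,v_5)$, which is indeed $(T,R)$-good. So the correct dichotomy is on the position of $R$ relative to $Y$ (respectively $U$ in the single-vertex case), not on which side of $T_u$ the contractible component lies; with that replacement your remaining steps do assemble into the paper's argument.
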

\begin{proof}
We prove the theorem by induction on the number of vertices of $G$. It is obviously true when $G$ has at most one vertex.

Now suppose $G$ is any connected chordal graph with at least two vertices for which $\ind(G)$ is contractible. Fix an arbitrary rooted tree model $(T,R)$ of $G$. For any vertex $v$ in $G$ let $f(v)$ denote the minimal distance in $T$ from any $v$-node to $R$. Since $G$ is chordal and connected, it has at least one peeling vertex. Pick $u$ to be any peeling vertex which minimizes $f(u')$ among all peeling vertices $u'$ of $G$.

By Lemma~\ref{lem:easy}.(b) the complex $\ind(G\setminus N_G[u])$ is contractible, and by Lemma~\ref{lem:easy}.(e) there is a connected component $G'$ of $G\setminus N_G[u]$ such that $\ind(G')$ is contractible. We consider two cases.

\smallskip
\textit{Case 1.} $G'$ has more than one vertex. Consider the graph $G'$ together with a rooted tree model
$$\Big(\big(T/u\big)|_{V(G')},\square\Big).$$
By induction there is a pair $(x,y)$ in $G'$ which is good with respect to this model. The same pair is $(T/u,\square)$-good in $G\setminus N_G[u]$ by Lemma~\ref{lem:trgoodlemma}.(c). Now Lemma~\ref{lem:trgoodlemma}.(b) implies that the pair $(x,y)$ is good in $G$. It remains to show that it is $(T,R)$-good.

Let $X$ and $U$ be the $x$-node and the $u$-node which minimize the distance in $T$ from any $x$-node to any $u$-node. Since $xu\not\in E(G)$, these nodes are well and uniquely defined.

The assumption that $(x,y)$ is $(T/u,\square)$-good in $G\setminus N_G[u]$ implies that the shortest path in $T$ from $X$ to $U$ contains a $y$-node. Let $Y$ be the $y$-node on that path which is closest to $X$. Then $X$ and $Y$ also minimize the distance from any $x$-node to any $y$-node. See Figure~\ref{fig:3}.

Since $G'$ is connected, and therefore $N_{G'}(x)\neq\emptyset$, there exists a vertex $u'\in V(G')\subseteq V(G)$ such that $xu',yu'\in E(G')\subset E(G)$. The set of $u'$-nodes in $T$ spans a subtree, and it follows that all nodes on the shortest path from $X$ to $Y$ (inclusive) are $u'$-nodes. Note that $u'$ is peeling in $G$ since its neighbour $x$ is simplicial in $G$ by Lemma~\ref{lem:easy}.(f).

\begin{figure}
\label{fig:3}
\includegraphics[scale=1]{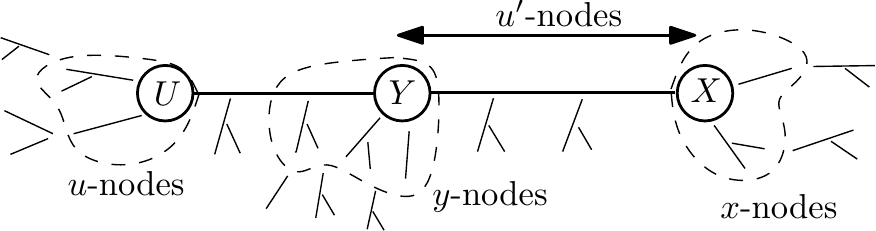}
\caption{The tree model in case 1 of the proof of Theorem~\ref{thm:main3}. All the nodes on the shortest path from $X$ to $Y$ are $u'$-nodes.}
\end{figure}

Consider all possible locations of $R$ in $T$. If $R$ and $X$ lie in the same component of $T\setminus Y$ then $f(u')<f(u)$, contradicting our choice of $u$. It follows that either $R=Y$ or $R$ and $X$ lie in different components of $T\setminus Y$. In these cases the shortest path from any $x$-node to $R$ visits $Y$, proving that $(x,y)$ is $(T,R)$-good in $G$.

\smallskip
\textit{Case 2.} $G'$ is a single vertex $x$. By Lemma~\ref{lem:trgoodlemma}.(a) the pair $(x,u)$ is good in $G$. As before, let $X$ and $U$ be the $x$-node and the $u$-node which minimize the distance in $T$ from any $x$-node to any $u$-node. Since $G$ is connected, there exists a vertex $u'\in V(G)$ such that $xu', uu'\in E(G)$, and therefore each node on the shortest path from $X$ to $U$ is a $u'$-node. Since $x$ is simplicial in $G$, its neighbour $u'$ is peeling in $G$. Now if $R$ and $X$ are in the same component of $T\setminus U$ then $f(u')<f(u)$ and we have a contradiction. Hence $R=U$ or $R$ and $U$ are in different components of $T\setminus U$, and then every path from an $x$-node to $R$ visits $U$, showing that $(x,u)$ is $(T,R)$-good in $G$.
\end{proof}

It remains to prove Theorem~\ref{thm:main2} (and thus Theorem~\ref{thm:main}).

\begin{proof}[Proof of Theorem~\ref{thm:main2}]
Let $G$ be a chordal graph with contractible $\ind(G)$ and  with at least two vertices. If $G$ is connected, then it has a good pair by Theorem~\ref{thm:main3}. If some connected component of $G$ is a single vertex $x$, then $(x,y)$ is a good pair in $G$ for any other vertex $y$. Finally, if some connected component $G'$ of $G$, with more than one vertex, satisfies that $\ind(G')$ is contractible, then the pair $(x,y)$ which is good in $G'$ is also good in $G$. By Lemma~\ref{lem:easy}.(e) that covers all the possibilities. 
\end{proof}

%=========================================================
\section{Other results about cores}
\label{sect:remarks}

Even if $\ind(G)$ is not contractible, one can still apply  elementary dismantlings to reduce the size of the complex. The process stops when we reach a complex without a dominated vertex (such complexes are called \emph{taut} in \cite{grunbaum1970nerves} and \emph{minimal} in \cite{barmak2012strong}), which we call the \emph{core} of the original complex. It is well-defined up to isomorphism regardless of the order of elementary dismantlings, see \cite[Thm. 2.11]{barmak2012strong}, \cite{matouvsek2008lc} for complexes and \cite[Thm. 2.60]{HellNesetril2004} for graphs. Of course $\ind(G)$ is taut if and only if $G$ has no good pair. 

For example, let $F$ be a forest. It is easy to see that a forest without a good pair is either a single vertex or a disjoint union of edges. It follows that the core of $\ind(F)$ is either a point or the boundary complex of some cross-polytope. The fact that $\ind(F)$ is homotopy equivalent to a point or a sphere was first proved in \cite{ehrenborg2006topology}, and the statement about cross-polytopal cores is implicitly contained in \cite{marietti2008cores}.

The next result extends Theorem~\ref{thm:main} and generalizes the example from the previous paragraph, by considering the case when $G$ is an arbitrary chordal graph and $\ind(G)$ is homotopy equivalent to a single sphere (i.e. has total reduced Betti number $1$). Let $M_k$ denote the matching with $k$ edges, that is the graph with $2k$ vertices whose each connected component is an edge. The independence complex $\ind(M_k)$ is isomorphic to the boundary complex of the $k$-dimensional cross-polytope.

\begin{theorem}
\label{thm:extension}
If $G$ is a chordal graph with a homotopy equivalence $\ind(G)\htpyequiv S^{k-1}$ for some $k\geq 1$ then the core of $\ind(G)$ is isomorphic to $\ind(M_{k})$.
\end{theorem}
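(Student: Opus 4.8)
The plan is to induct on the number of vertices of $G$, peeling off one vertex at a time while keeping track of the homotopy type, exactly as in the proof of Theorem~\ref{thm:main3}. The base case $k=1$ is easy: $\ind(G)\htpyequiv S^0$ means $G$ is disconnected (two or more components), and by Lemma~\ref{lem:easy}.(b),(e) at most one component can have non-trivial homology. Using Theorem~\ref{thm:main} on the contractible components, one dismantles them down to their cores (single vertices), each of which then forms a good pair with any other vertex; this collapses $G$ down to a single edge $M_1$ unless there is already further structure to unravel, and $\ind(M_1)=S^0$ with no good pair. More carefully, one shows that any chordal $G$ with $\ind(G)\htpyequiv S^0$ dismantles to $M_1$. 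For the inductive step with $k\geq 2$, first reduce to the connected case: if $G$ is disconnected, then since $\ind(G)$ is a join of the $\ind(G_i)$ and has the homotopy type of a single sphere, exactly one factor is a sphere and all the others are contractible (a join of a sphere with a contractible space is contractible, and a join of two spaces with non-trivial reduced homology has reduced homology in higher degree with rank $\geq$ the product of the ranks, hence total Betti number $\geq 2$ unless one factor is contractible and the other is a single sphere, or unless... wait, $S^a * S^b = S^{a+b+1}$, so actually a join of \emph{two} spheres is one sphere). So I must handle: either all-but-one component contractible, and then dismantle the contractible ones away by Theorem~\ref{thm:main} and recurse on the surviving component; or the components split into spheres whose suspended join is $S^{k-1}$ --- but a forest-like decomposition shows repeated good pairs still apply.

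The heart of the argument is the connected case, and here I would run the peeling machinery of Lemma~\ref{lem:easy}.(a). Pick a peeling vertex $u$ (exists since $G$ has an edge). The splitting \eqref{eq:peel} gives $\ind(G)\htpyequiv \ind(G\setminus u)\vee\Sigma\ind(G\setminus N_G[u])$. Since each summand is itself a wedge of spheres or contractible (Lemma~\ref{lem:easy}.(b)) and the total is $S^{k-1}$, exactly one of two things happens: (i) $\ind(G\setminus u)\htpyequiv S^{k-1}$ and $\ind(G\setminus N_G[u])$ is contractible, or (ii) $\ind(G\setminus u)$ is contractible and $\Sigma\ind(G\setminus N_G[u])\htpyequiv S^{k-1}$, i.e.\ $\ind(G\setminus N_G[u])\htpyequiv S^{k-2}$. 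In case (i), $G\setminus u$ is chordal with $\ind\htpyequiv S^{k-1}$ and fewer vertices, so by induction its core is $\ind(M_k)$; I then want to argue that the dismantling of $G\setminus u$ to its core, together with the fact that $\ind(G\setminus N_G[u])$ is contractible, lets me also dismantle $u$ away or lets me conclude the core of $\ind(G)$ is the same $\ind(M_k)$. This requires the good-pair analysis: using Theorem~\ref{thm:main3} on the contractible complex $\ind(G\setminus N_G[u])$ (restricted to the component containing a witness), one finds a good pair in $G\setminus N_G[u]$, which Lemma~\ref{lem:trgoodlemma}.(b) promotes to a good pair $(x,y)$ in $G$; so $\ind(G)$ is not taut and we can dismantle and recurse on $G\setminus y$, which still has $\ind\htpyequiv S^{k-1}$. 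In case (ii), $G\setminus N_G[u]$ is chordal with $\ind\htpyequiv S^{k-2}$, so by induction its core is $\ind(M_{k-1})$; the idea is that $u$ together with this $M_{k-1}$ should assemble into $M_k$. Concretely, after dismantling $G\setminus N_G[u]$ down to $M_{k-1}$ (inside $G$, which is legitimate since good pairs in $G\setminus N_G[u]$ lift to good pairs in $G$ by Lemma~\ref{lem:trgoodlemma}.(b)), one is left with a chordal graph $G''$ on $N_G[u]$ plus an $M_{k-1}$, with $\ind(G'')\htpyequiv S^{k-1}$; showing that the core of $\ind(G'')$ is $\ind(M_k)$ reduces, by an induced-matching / cross-cycle argument as in \cite{adamaszek2012algorithmic,jonsson2010certain}, to showing the induced matching representing the fundamental class has exactly $k$ edges and the rest of $G''$ can be dismantled away.

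The main obstacle I anticipate is the bookkeeping in case (ii) and, more generally, showing that one can always dismantle $\ind(G)$ all the way down to \emph{exactly} $\ind(M_k)$ with nothing extra --- i.e.\ that the core is not some larger taut complex that merely happens to be homotopy equivalent to $S^{k-1}$. The key structural input to rule this out should be that a chordal graph whose independence complex is taut (has no good pair) and homotopy equivalent to $S^{k-1}$ must itself \emph{be} $M_k$; equivalently, $M_k$ is the unique taut chordal representative of its homotopy type. One proves this by the same peeling induction: a taut chordal $G$ with $\ind(G)\htpyequiv S^{k-1}$ has, for every peeling vertex $u$, that $\ind(G\setminus N_G[u])\htpyequiv S^{k-2}$ (case (i) is impossible since it would, via the good-pair promotion above, contradict tautness); then by induction $G\setminus N_G[u]$ would have a non-$M_{k-1}$ core unless it already equals $M_{k-1}$, and tautness of $G$ forces $N_G[u]$ to be a single edge disjoint from everything (any extra vertex or edge in $N_G[u]$, or any edge from $N_G[u]$ to the $M_{k-1}$, would create a dominated vertex), giving $G=M_k$. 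Making each of these "extra vertex/edge would create a good pair" claims precise --- carefully using chordality to forbid induced $4$-cycles, as in Lemma~\ref{lem:easy}.(f) --- is the part requiring genuine care rather than routine verification.
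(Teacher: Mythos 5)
Your reduction of the theorem to the statement ``a chordal graph with no good pair and $\ind(G)\htpyequiv S^{k-1}$ must be $M_k$'' is exactly the right move, and your argument that $\ind(G\setminus N_G[u])$ can never be contractible for such a $G$ (via Theorem~\ref{thm:main3} on a component of $G\setminus N_G[u]$ and Lemma~\ref{lem:trgoodlemma}(c),(b)) is correct and is also how the paper begins. But the inductive step you sketch for the taut case has a genuine gap: you repeatedly treat good pairs of $G\setminus N_G[u]$ as if they lift to good pairs of $G$ ``by Lemma~\ref{lem:trgoodlemma}.(b)''. That lemma only applies to $(T/u,\square)$-good pairs; an arbitrary good pair in $G\setminus N_G[u]$ need not be good in $G$, because $x$ may have neighbours inside $N_G[u]$ that $y$ lacks (e.g.\ the path $u\,z\,x\,w\,y$: in $G\setminus N_G[u]$ the pair $(x,y)$ is good, but $N_G(x)=\{z,w\}\not\subseteq\{w\}=N_G(y)$). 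Consequently, knowing by induction that the \emph{core} of $\ind(G\setminus N_G[u])$ is $\ind(M_{k-1})$ tells you nothing about $G\setminus N_G[u]$ itself: tautness of $G$ does not pass to $G\setminus N_G[u]$, so the step ``$G\setminus N_G[u]$ already equals $M_{k-1}$, and tautness of $G$ forces $N_G[u]$ to be a single pendant edge'' has no justification, and your case~(ii) bookkeeping (deferred to an unspecified cross-cycle argument) never closes it. Your base case also contains a slip: $\ind(G)\htpyequiv S^0$ does not imply $G$ is disconnected ($G=K_2$).

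The missing idea, which is what the paper uses, is to exploit the full iterated splitting at a \emph{simplicial} vertex $v$: $\ind(G)\htpyequiv\bigvee_{u\in N_G(v)}\Sigma\,\ind(G\setminus N_G[u])$ (equation~\eqref{eq:long}). Once you know each $\ind(G\setminus N_G[u])$ is non-contractible, a simplicial vertex of degree $\geq 2$ would produce at least two non-trivial wedge summands, contradicting $\ind(G)\htpyequiv S^{k-1}$; degree $0$ is excluded since an isolated vertex makes $\ind(G)$ a cone. So every simplicial vertex has degree exactly $1$, and then any vertex at distance $2$ from it would form a good pair with it, contradicting tautness. Hence every component is a single edge and a dimension count gives $M_k$ — no induction on $k$ and no lifting of non-tree-compatible good pairs is needed. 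Your plan as written does not reach this conclusion.
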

\begin{proof}
It suffices to prove the following claim: If $G$ is a chordal graph without a good pair and $\ind(G)\htpyequiv S^{k-1}$ then $G$ is isomorphic to $M_{k}$. Take any graph $G$ as in the claim and fix any tree model $T$ of $G$. 

We will first show that for any vertex $u$ of $G$ the complex $\ind(G\setminus N_G[u])$ is not contractible. Suppose otherwise, and let $G'$ be some connected component of $G\setminus N_G[u]$ such that $\ind(G')$ is contractible (it exists by Lemma~\ref{lem:easy}.(e)). If $G'$ is a single vertex $x$ then $(x,u)$ is a good pair in $G$ by Lemma~\ref{lem:trgoodlemma}.(a), a contradiction. If $G'$ has more than one vertex then, by Theorem~\ref{thm:main3}, there is a pair $(x,y)$ which is $((T/u)|_{V(G')},\square)$-good in $G'$. By Lemma~\ref{lem:trgoodlemma} parts (c) and (b) this pair is also good in $G$, which is again a contradiction. It shows that $\ind(G\setminus N_G[u])$ is not contractible.

An inductive application of Lemma~\ref{lem:easy}.(a) shows that if $v$ is a simplicial vertex of $G$ then there is a homotopy equivalence
\begin{equation}
\label{eq:long}
\ind(G)\htpyequiv \bigvee_{u\in N_G(v)} \Sigma\ \ind(G\setminus N_G[u]),
\end{equation}
see also \cite[Thm. 3.5]{engstrom2009complexes}. If $v$ has degree at least $2$ then the wedge sum in \eqref{eq:long} has at least $2$ summands, and by the previous observation each of them is homotopy equivalent to a non-contractible wedge of spheres. That contradicts the fact that $\ind(G)$ is homotopy equivalent to a single sphere. Consequently, every simplicial vertex of $G$ has degree $0$ or $1$. The first option is impossible, since then $\ind(G)$ would be contractible. It follows that every simplicial vertex of $G$ has degree $1$.

If $v$ is a vertex of degree $1$ and $y$ is any vertex in distance $2$ from $v$ then the pair $(v,y)$ is good in $G$. It means that every simplicial vertex of $G$ has no vertices at distance $2$, and it easily follows that every connected component of $G$ is a single edge, i.e. $G$ is a matching. A comparison of dimensions shows that  this matching must be $M_{k}$, which ends the proof.
\end{proof}

This yields a polynomial-time algorithm for the problem of checking if $\ind(G)$ is homotopy equivalent to a sphere for a chordal graph $G$.

\smallskip
For wedges of spheres with higher Betti numbers the situation is more complicated. For example, there are (at least) two chordal graphs whose independence complex is taut and homotopy equivalent to $S^1\vee S^1$. The first one is the disjoint union of $K_3$ and $K_2$ and the other is the $7$-vertex graph obtained from three copies $abc,def,ghi$ of $K_3$ by identifying $c$ with $d$ and $f$ with $g$. The best we can offer is the following.

\begin{conjecture}
For every finite wedge sum of spheres $X$ there are only finitely many chordal graphs $G$ such that $G$ has no good pair (i.e. $\ind(G)$ is taut) and $\ind(G)\htpyequiv X$.
\end{conjecture}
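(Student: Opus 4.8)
Since the statement is a conjecture, what follows is a proposed line of attack rather than a complete argument. The plan is to prove, by induction on the total reduced Betti number $\widetilde{\beta}(X):=\sum_i\dim\redhom_i(X)$, the (sufficient) statement that $|V(G)|$ is bounded by a function of $\widetilde{\beta}(X)$ and of the top dimension $d$ of $X$, for every chordal graph $G$ with no good pair and $\ind(G)\htpyequiv X$. The cases $\widetilde{\beta}(X)\le 1$ are already settled: if $\widetilde{\beta}(X)=0$ then $\ind(G)$ is contractible and $G$ is a single vertex by Theorem~\ref{thm:main2}, and if $\widetilde{\beta}(X)=1$ then $G\cong M_{d+1}$ by Theorem~\ref{thm:extension}.

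In the inductive step $\widetilde{\beta}(X)\ge 2$, so $\ind(G)$ is not contractible, and the first move is to reduce to connected $G$. Writing $G$ as the disjoint union of its components $G_1,\dots,G_m$ turns $\ind(G)$ into the join $\ind(G_1)\ast\cdots\ast\ind(G_m)$; since no $\ind(G_i)$ is contractible (Lemma~\ref{lem:easy}.(e)), the total reduced Betti number is multiplicative over the factors, while the top dimension of $\ind(G)$ exceeds the sum of the top dimensions of the $\ind(G_i)$ by exactly $m-1$. Hence at most $\log_2\widetilde{\beta}(X)$ of the components satisfy $\widetilde{\beta}(\ind(G_i))\ge 2$; every other component has $\ind(G_i)$ homotopy equivalent to a single sphere and no good pair, so $G_i\cong M_{k_i}$ by Theorem~\ref{thm:extension}, and the dimension bookkeeping forces $\sum k_i\le d+1$, so the matching components together contribute at most $2(d+1)$ vertices. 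It therefore suffices to bound $|V(G)|$ when $G$ is connected.

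So assume $G$ is connected and chordal with no good pair, $\ind(G)\htpyequiv X$ and $\widetilde{\beta}(X)\ge 2$; then $G$ has at least three vertices and an edge, hence a simplicial vertex $v$ by Dirac's theorem, and $\deg_G v\ge 2$ (if $N_G(v)=\{w\}$ then by connectedness $w$ has a neighbour $z\neq v$, whence $N_G(v)\subseteq N_G(z)$ and $(v,z)$ is a good pair). Writing $N_G(v)=\{w_1,\dots,w_p\}$, a clique with $p\ge 2$, formula \eqref{eq:long} gives
\[
X\htpyequiv\ind(G)\htpyequiv\bigvee_{i=1}^{p}\Sigma\,\ind\bigl(G\setminus N_G[w_i]\bigr).
\]
By the argument that opens the proof of Theorem~\ref{thm:extension} (which uses only that $G$ is chordal with no good pair), each $\ind(G\setminus N_G[w_i])$ is non-contractible, so its total reduced Betti number is at least $1$; as these numbers sum to $\widetilde{\beta}(X)$ we get $p\le\widetilde{\beta}(X)$ and $\widetilde{\beta}(\ind(G\setminus N_G[w_i]))\le\widetilde{\beta}(X)-(p-1)<\widetilde{\beta}(X)$, while the top dimension of $\ind(G\setminus N_G[w_i])$ is at most $d-1$. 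Thus each $\ind(G\setminus N_G[w_i])$ has one of finitely many homotopy types. Since deleting the vertex $y$ of a good pair $(x,y)$ keeps a graph chordal, the core of $\ind(G\setminus N_G[w_i])$ equals $\ind(\Gamma_i)$ for a chordal graph $\Gamma_i$ with no good pair and $\ind(\Gamma_i)\htpyequiv\ind(G\setminus N_G[w_i])$, and by the inductive hypothesis each such $\Gamma_i$ lies in a finite family depending only on $\widetilde{\beta}(X)$ and $d$.

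What remains — and is the crux — is to bound $|V(G)|$ in terms of the now bounded data $v$, $w_1,\dots,w_p$ (with $p\le\widetilde{\beta}(X)$) and $\Gamma_1,\dots,\Gamma_p$. Since $|V(G)|=|N_G[w_i]|+|V(G\setminus N_G[w_i])|$, this amounts to bounding $|V(G\setminus N_G[w_i])|-|V(\Gamma_i)|$, the length of a dismantling sequence from $\ind(G\setminus N_G[w_i])$ to its core. Here the induction stalls: dismantling sequences are not controlled by homotopy type, since dismantlable complexes can be arbitrarily large, so one must exploit the fact that $G\setminus N_G[w_i]$ sits inside the taut graph $G$. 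The natural tool is a tree model $T$ of $G$: any good pair $(x,y)$ used in dismantling $G\setminus N_G[w_i]$ fails to be a good pair of $G$, so $x$ must have a neighbour in $N_G[w_i]$ and hence, by the subtree property of the $x$-nodes and the $w_i$-nodes, lie within a bounded tree-distance of the contracted node $\square$ of $(T/w_i,\square)$; one would then want to show that this confinement is inherited at every stage and therefore bounds the entire dismantlable part. Making this precise — in particular, controlling how newly created good pairs propagate through the tree model as vertices are deleted — is the missing ingredient, and is why the statement is so far only conjectural.
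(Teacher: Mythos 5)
The statement you are addressing is left open in the paper: it appears there only as a conjecture, with the cases where $X$ is a point or a single sphere settled by Theorems~\ref{thm:main2} and~\ref{thm:extension} (and the case of wedges of copies of $S^0$ left as an exercise). So there is no proof of the paper's to compare against, and your text is, as you yourself say, a programme rather than a proof. The parts you do argue are essentially sound: the reduction to connected $G$ via the join decomposition and the multiplicativity of the total reduced Betti number (valid here because every factor is a wedge of spheres), the choice of a simplicial vertex $v$ of degree at least $2$ together with \eqref{eq:long}, the non-contractibility of each $\ind(G\setminus N_G[w_i])$ (the opening argument of the proof of Theorem~\ref{thm:extension} indeed uses only that $G$ is chordal with no good pair), and the fact that the core of $\ind(G\setminus N_G[w_i])$ is $\ind(\Gamma_i)$ for an induced chordal subgraph $\Gamma_i$ without a good pair.

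The genuine gap is exactly where you locate it, and it is not a technicality but the whole difficulty. Knowing the isomorphism types of the cores $\Gamma_i$ bounds neither $|N_G[w_i]|$ nor the length of a dismantling sequence from $\ind(G\setminus N_G[w_i])$ down to its core, and neither quantity is a function of homotopy-type data, since dismantlable (hence homotopically trivial) pieces can be arbitrarily large; note in particular that your accounting $|V(G)|=|N_G[w_i]|+|V(G\setminus N_G[w_i])|$ still leaves the first summand uncontrolled. The tree-model heuristic does not close this: the observation that a good pair $(x,y)$ of $G\setminus N_G[w_i]$ which is not good in $G$ forces $x$ to have a neighbour in $N_G[w_i]$ applies only to the first dismantling step; a good pair of a later intermediate graph need not fail to be good in $G\setminus N_G[w_i]$, and its failure to be good in $G$ may be witnessed by previously deleted vertices rather than by anything near $\square$, so the confinement does not propagate. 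Moreover, ``bounded tree-distance from $\square$'' does not bound the number of vertices, as a single node of a tree model may carry arbitrarily many labels. The paper's example of two different taut chordal graphs with $\ind(G)\htpyequiv S^1\vee S^1$ shows that already at total Betti number $2$ the structure is not rigid, so any successful argument must control this dismantlable excess in a genuinely new way; until then the statement remains, as in the paper, a conjecture.
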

The results of this note imply the conjecture when $X$ is a point or a single sphere. We leave it as an exercise to check that it holds also when $X$ is a wedge sum of copies of $S^0$.

%=========================================================
\bibliographystyle{plain}
\bibliography{dismantle}

\end{document}